\documentclass[aps,prl,final,twocolumn,showpacs]{revtex4}

\usepackage{amsmath,amssymb,amsthm}
\usepackage{graphicx}
\usepackage{epsf,url}

\newtheorem{theorem}{Theorem}
\newtheorem{prop}{Proposition}
\newtheorem{lemma}{Lemma}

\begin{document}

\title{Dynamical properties of $k$-free lattice points}

\author{Christian Huck and Michael Baake}
\affiliation{Fakult\"{a}t f\"{u}r Mathematik, 
Universit\"{a}t Bielefeld,
Postfach 100131, 33501 Bielefeld, Germany}

\begin{abstract}
 We revisit the visible points of a lattice in Euclidean $n$-space together with their
generalisations, the $k$th-power-free points of a lattice, and study the corresponding
dynamical system that arises via the closure of the lattice
translation orbit. Our analysis extends
previous results obtained by Sarnak and by Cellarosi and
Sinai for the special case of square-free integers and sheds new light
on previous joint work with Peter Pleasants.
 \end{abstract}

\pacs{61.05.cc,  
      61.43.-j,  
      61.44.Br  
     }

\maketitle

\section{Introduction}

In \cite{BMP}, the diffraction properties of the visible points of
$\mathbb Z^2$ and the
$k$th-power-free numbers were studied. It was shown that these sets
have positive, pure-point, translation-bounded \emph{diffraction spectra} with countable,
dense support. This is of interest because these sets
fail to be Delone sets: they are uniformly discrete (subsets of lattices, in fact)
but not relatively dense. The lack of relative denseness means that these
sets have arbitrarily large `holes'. In \cite{PH}, it was shown that
the above results remain true for the larger class of $k$th-power-free
(or $k$-free for short) 
points of arbitrary lattices in $n$-space. Furthermore, it was shown
there that these sets have positive \emph{patch counting entropy} but
zero \emph{measure-theoretical entropy} with respect to a measure that is defined in terms of the
`tied' frequencies of patches in space. 

Recent independent results by Sarnak~\cite{Sarnak} and by Cellarosi and
Sinai~\cite{CS} on the natural dynamical system associated with the square-free (resp.\
$k$th-power-free) integers (in particular on the ergodicity of the
above frequency measure and the dynamical spectrum, but also on the topological dynamics) go beyond what was covered
in~\cite{PH}. The aim of this short note is to generalise these
results to the setting of $k$-free lattice points. 

\section{$k$-free points}

The \emph{$k$-free points} $V=V(\varLambda,k)$ of a lattice
$\varLambda\subset\mathbb R^n$ are the points with the property that the
greatest common divisor of their 
coordinates in any lattice basis is not divisible by any non-trivial
$k$th power of an integer.  Without restriction, we shall assume that
$\varLambda$ is
unimodular, i.e.\ $|\det(\varLambda)|=1$. One can see that $V$ is
\emph{non-periodic}, i.e.\ $V$ has no non-zero translational symmetries. As particular cases, we have
the visible points (with respect to the origin $0$) of $\varLambda$ (with $n\ge2$ and $k=1$) and the
$k$-free integers (with $\varLambda=\mathbb Z$), both treated in
\cite{BMP} and \cite{BG}. We exclude the trivial case $n=k=1$, where $V$ consists
of just the two points of $\varLambda$ closest to $0$ on either side.

\begin{center}
\begin{figure}
\centerline{\epsfysize=0.48\textwidth\epsfbox{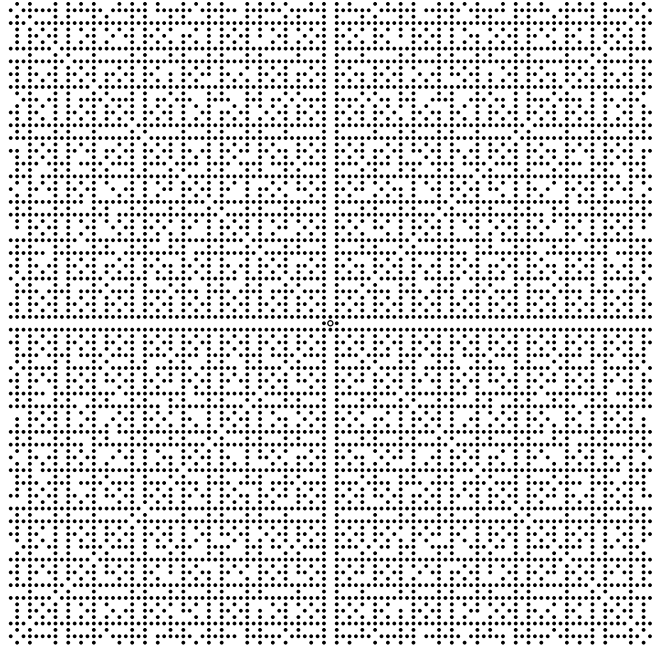}}
\caption{A central patch of the visible points of the square lattice
  $\mathbb Z^2$. Note the invariance with respect to $\operatorname{GL}(2,\mathbb Z)$.}
\label{fig:vis}
\end{figure}
\end{center}

Let $v_n=\operatorname{vol}(B_1(0))$, so that $v_nR^n$ is the volume
of the open ball $B_R(0)$ of radius $R$ about $0$. If $Y\subset\varLambda$, its `tied' \emph{density} $\operatorname{dens}(Y)$ is defined by
$$
\operatorname{dens}(Y):=\lim_{R\to\infty}\frac{|Y\cap B_R(0)|}{v_nR^n},
$$
when the limit exists. The following result is well known.

\begin{theorem}{\rm \cite[Cor.~1]{PH}}
One has $\operatorname{dens}(V)=1/\zeta(nk)$,
where $\zeta$ denotes
Riemann's $\zeta$-function.
\qed
\end{theorem}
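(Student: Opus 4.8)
The plan is to exhibit $V$ as the result of sieving $\varLambda$ by the rational primes and to read off its density from a truncated inclusion--exclusion, the one genuine point being a uniform control of the tail of the sieve. First I would reformulate the defining property arithmetically: for any basis of $\varLambda$, the greatest common divisor of the coordinates of $x\in\varLambda$ is divisible by $p^k$ exactly when $x\in p^k\varLambda$, and since $p^k\varLambda$ does not depend on the basis, this shows both that $V$ is well defined and that $V=\varLambda\setminus\bigcup_{p}p^k\varLambda$. Two elementary facts feed the computation. Since $\gcd(p_i^k,p_j^k)=1$ for $i\ne j$, iterating the identity $a\varLambda\cap b\varLambda=ab\varLambda$ for coprime $a,b$ (a one-line consequence of a B\'{e}zout relation) gives $p_1^k\varLambda\cap\dots\cap p_r^k\varLambda=d^k\varLambda$ with $d=p_1\cdots p_r$; and since $\varLambda\cong\mathbb Z^n$, the sublattice $d^k\varLambda$ has index $d^{nk}$, so inside $\varLambda$ it has density $d^{-nk}$.

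Next, fix a truncation level $N$ and put $V_N:=\varLambda\setminus\bigcup_{p\le N}p^k\varLambda$. This set is a union of cosets of the finite-index sublattice $m^k\varLambda$ with $m:=\prod_{p\le N}p$, and a periodic subset of a unimodular lattice has a density equal to the fraction of cosets it contains; finite inclusion--exclusion over the primes $p\le N$, using the two facts above, evaluates this fraction as $\sum_{d\mid m,\ \mu(d)\ne0}\mu(d)\,d^{-nk}=\prod_{p\le N}(1-p^{-nk})$. Since $V\subseteq V_N$, this already gives $\limsup_{R\to\infty}|V\cap B_R(0)|/(v_nR^n)\le\prod_{p\le N}(1-p^{-nk})$, and letting $N\to\infty$ yields the upper bound $1/\zeta(nk)$; the Euler product converges precisely because $nk\ge2$, which is exactly why the case $n=k=1$ is excluded.

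For the matching lower bound, note that $V=V_N\cap(\varLambda\setminus\bigcup_{p>N}p^k\varLambda)$, so $|V_N\cap B_R(0)|-|V\cap B_R(0)|\le|B_R(0)\cap\bigcup_{p>N}p^k\varLambda|\le 1+\sum_{p>N}|B_R(0)\cap(p^k\varLambda\setminus\{0\})|$. Each $y\in p^k\varLambda\cap B_R(0)$ carries a translate $y+F$ of a fixed fundamental domain $F\subseteq B_{p^k\rho}(0)$ of $p^k\varLambda$, where $\rho$ is the covering radius of $\varLambda$; these translates are disjoint, have volume $p^{nk}$, and lie in $B_{R+p^k\rho}(0)$, so $|p^k\varLambda\cap B_R(0)|\le v_n(R+p^k\rho)^n/p^{nk}=v_n(R/p^k+\rho)^n$. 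Since the nonzero count vanishes unless $p^k\le R/\lambda_1$, with $\lambda_1$ the minimal norm of $\varLambda$, this is bounded by $C\,R^n p^{-nk}$ with $C=C(\varLambda)$; summing over $p>N$ and invoking $\sum_{p}p^{-nk}<\infty$ shows the truncation error is at most $\varepsilon R^n$ once $N=N(\varepsilon)$ is large enough. Hence $|V\cap B_R(0)|\ge|V_N\cap B_R(0)|-\varepsilon R^n-1$, so $\liminf_{R\to\infty}|V\cap B_R(0)|/(v_nR^n)\ge\prod_{p\le N}(1-p^{-nk})-\varepsilon$; letting $N\to\infty$ and then $\varepsilon\to0$ gives $\liminf\ge1/\zeta(nk)$, so the limit exists and equals $1/\zeta(nk)$. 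I expect the only real obstacle to be making this last tail bound genuinely uniform in both $p$ and $R$ --- that is, absorbing the geometric boundary contribution $v_n(R/p^k+\rho)^n-v_nR^np^{-nk}$ into $C\,R^np^{-nk}$ for all relevant $p$ --- everything else being a routine density count for periodic point sets.
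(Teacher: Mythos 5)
Your argument is correct and is essentially the proof behind the cited result: the paper states this theorem without proof, quoting \cite[Cor.~1]{PH}, where the density arises from exactly this kind of inclusion--exclusion over the sublattices $p^k\varLambda$ with a uniform tail estimate (it is the special case of the patch-frequency formula of Theorem~\ref{freq} for the one-point patch $\mathcal P=\{0\}$, which yields $\prod_p(1-p^{-nk})=1/\zeta(nk)$). Your truncation $V_N$, the coset count $\prod_{p\le N}(1-p^{-nk})$, and the bound $|p^k\varLambda\cap B_R(0)|\le C\ts R^n p^{-nk}$ for $p^k\le R/\lambda_1$ together with $\sum_p p^{-nk}<\infty$ (valid since $nk\ge 2$) constitute a complete and correct proof.
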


An application of the Chinese Remainder Theorem immediately gives the
following result on the occurrence of `holes' in $V$.

\begin{prop}{\rm \cite[Prop. 1]{PH}}\label{holes}
$V$ is uniformly discrete, but has arbitrarily large holes. Moreover,
for any $r>0$, the set of centres of holes in $V$ of inradius at least $r$
contains a coset of $m^k\varLambda$ in $\varLambda$ 
for some $m\in\mathbb N$.
\qed
\end{prop}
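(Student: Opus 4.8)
The plan is to settle uniform discreteness in one line and then to obtain the hole statement by an explicit application of the Chinese Remainder Theorem to the lattice $\varLambda$, with ``arbitrarily large holes'' falling out as the case of arbitrary $r$. Uniform discreteness is immediate: since $V\subseteq\varLambda$, any two distinct points of $V$ are at distance at least the minimal distance of $\varLambda$.

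For the rest, I would first reformulate the defining condition. Fixing a basis of $\varLambda$, a point $\nu\in\varLambda$ lies outside $V$ precisely when the gcd of its coordinates is divisible by some non-trivial $k$th power, equivalently by $p^{k}$ for some prime $p$; and $p^{k}$ divides this gcd exactly when $p^{k}$ divides every coordinate, that is, exactly when $\nu\in p^{k}\varLambda$. Hence
$$
\varLambda\setminus V \;=\; \bigcup_{p\ \mathrm{prime}} p^{k}\varLambda ,
$$
a description that is visibly independent of the chosen basis.

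Then, given $r>0$, I would let $\mu_{1},\dots,\mu_{N}$ enumerate the finitely many points of $\varLambda\cap B_{r}(0)$ and choose pairwise distinct primes $p_{1},\dots,p_{N}$, setting $m:=p_{1}\cdots p_{N}$ so that $\prod_{i}p_{i}^{k}=m^{k}$. Applying the Chinese Remainder Theorem to $\varLambda\cong\ZZ^{n}$ with the pairwise coprime moduli $p_{i}^{k}$ gives an isomorphism $\varLambda/m^{k}\varLambda\cong\prod_{i=1}^{N}\varLambda/p_{i}^{k}\varLambda$ of abelian groups; in particular the simultaneous congruences $\lambda\equiv-\mu_{i}\pmod{p_{i}^{k}\varLambda}$ for $1\le i\le N$ admit a solution $c_{0}\in\varLambda$, with full solution set $c_{0}+m^{k}\varLambda$. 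For any $\lambda$ in this coset, every lattice point of $B_{r}(\lambda)$ equals $\lambda+\mu_{i}$ for some $i$, and then $\lambda+\mu_{i}\in p_{i}^{k}\varLambda\subseteq\varLambda\setminus V$; hence $B_{r}(\lambda)\cap V=\emptyset$, so $\lambda$ is the centre of a hole of $V$ of inradius at least $r$. This shows that $c_{0}+m^{k}\varLambda$ is contained in the set of such centres, and letting $r\to\infty$ produces arbitrarily large holes.

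The argument is essentially bookkeeping, so I do not expect a genuine obstacle; the two points that deserve care are the reduction in the reformulation (from ``divisible by a non-trivial $k$th power'' down to ``lies in $p^{k}\varLambda$ for some prime $p$'', together with the basis-independence of the latter) and the elementary but crucial observation that the product of the coprime moduli $p_{i}^{k}$ is again a $k$th power $m^{k}$ --- precisely what forces the set of hole centres to contain a coset of $m^{k}\varLambda$ rather than of some less structured sublattice.
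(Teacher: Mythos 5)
Your proof is correct and follows exactly the route the paper indicates (it introduces the proposition with ``An application of the Chinese Remainder Theorem immediately gives \dots''): uniform discreteness from $V\subseteq\varLambda$, the identification $\varLambda\setminus V=\bigcup_p p^k\varLambda$, and the CRT construction of a coset of $m^k\varLambda$ with $m=p_1\cdots p_N$ whose elements centre holes of inradius at least $r$. No discrepancies with the cited argument of \cite[Prop.~1]{PH}.
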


Given a radius $\rho>0$ and a point $t\in\Lambda$,
the \emph{$\rho$-patch} of $V$ at $t$ is
\[(V-t)\cap B_\rho(0),\]
the translation to the origin of the part of $V$ within a distance
$\rho$ of $t$. We denote by $\mathcal A(\rho)$ the (finite) set of all
$\rho$-patches of $V$, and by $N(\rho)=|\mathcal A(\rho)|$ the number of
distinct $\rho$-patches of $V$. In view of the binary configuration
space interpretation, and following~\cite{PH}, the \emph{patch counting entropy} of
$V$ is defined as
$$
h_{\rm pc}(V):=\lim_{\rho\to\infty}\frac{\log_2N(\rho)}{v_n\rho^n}.
$$
It can be shown by a classic subadditivity argument that this limit exists.

Following~\cite{BMP,PH}, the `tied' \emph{frequency} $\nu(\mathcal P)$
of a $\rho$-patch $\mathcal P$ of $V$ is defined by
\begin{equation}\label{freqdef}
\nu(\mathcal
P):=\operatorname{dens}\big(\{t\in\varLambda\,\mid\,(V-t)\cap B_\rho(0)=\mathcal P\}\big),
\end{equation}
which can indeed be seen to exist. Moreover, one has

\begin{theorem}{\rm \cite[Thms.~1 and~2]{PH}}\label{freq}
Any $\rho$-patch $\mathcal P$ of $V$ occurs with positive frequency, given by
\[\nu(\mathcal P)=\sum_{\mathcal F\subset (B_{\rho}(0)\cap\varLambda)\setminus \mathcal P}(-1)^{|\mathcal F|}
\prod_p\left(1-\frac{|(\mathcal P\cup\mathcal
    F)/p^k\varLambda|}{p^{nk}}\right),\]
where $p$ runs through all primes.
\qed
\end{theorem}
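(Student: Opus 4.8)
The plan is to turn the frequency into an Eratosthenes-type sieve over the primes, in the spirit of Theorem~1 (which is the case $\mathcal G=\{0\}$ of the density identity below, yielding $\prod_p(1-p^{-nk})=1/\zeta(nk)$). A point $x\in\varLambda$ lies in $V$ exactly when $x\notin p^k\varLambda$ for every prime $p$, so $\mathbf 1_V=\prod_p(1-\mathbf 1_{p^k\varLambda})$, a product that is locally finite since for $x\neq 0$ only finitely many factors differ from $1$. Fix $\rho$, put $B=B_\rho(0)\cap\varLambda$ and $\mathcal Q=B\setminus\mathcal P$, and note that $t\in\varLambda$ produces the $\rho$-patch $\mathcal P$ of $V$ precisely when $t+s\in V$ for all $s\in\mathcal P$ and $t+s'\notin V$ for all $s'\in\mathcal Q$. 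Hence the set $\{t\in\varLambda\,\mid\,(V-t)\cap B_\rho(0)=\mathcal P\}$ whose density defines $\nu(\mathcal P)$ has indicator function
\[
\prod_{s\in\mathcal P}\mathbf 1_V(t+s)\prod_{s'\in\mathcal Q}\bigl(1-\mathbf 1_V(t+s')\bigr)=\sum_{\mathcal F\subset\mathcal Q}(-1)^{|\mathcal F|}\prod_{s\in\mathcal P\cup\mathcal F}\mathbf 1_V(t+s),
\]
after expanding the second product and using $\mathbf 1_V^2=\mathbf 1_V$. As this sum is finite, everything reduces to proving that, for any finite $\mathcal G\subset\varLambda$, the density $\operatorname{dens}\bigl(\bigcap_{s\in\mathcal G}(V-s)\bigr)$ exists and equals $\prod_p\bigl(1-|\mathcal G/p^k\varLambda|/p^{nk}\bigr)$, where $|\mathcal G/p^k\varLambda|$ is the number of residue classes modulo $p^k\varLambda$ met by $\mathcal G$; summing this over $\mathcal F$ with $\mathcal G=\mathcal P\cup\mathcal F$ then gives the stated formula.

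To compute this density, set $g_p(t)=\prod_{s\in\mathcal G}\bigl(1-\mathbf 1_{p^k\varLambda}(t+s)\bigr)\in\{0,1\}$, so the relevant indicator is $\prod_p g_p$. For any finite set $S$ of primes, $\prod_{p\in S}g_p$ is periodic with respect to the sublattice $\bigl(\prod_{p\in S}p\bigr)^k\varLambda$, and the ball average of such a periodic function converges, as the radius tends to infinity, to the average of its values over the finite quotient of $\varLambda$ by that sublattice (the elementary lattice-point count already used for $\operatorname{dens}(V)=1/\zeta(nk)$). By the Chinese Remainder Theorem, $\varLambda/\bigl(\prod_{p\in S}p\bigr)^k\varLambda\cong\prod_{p\in S}\varLambda/p^k\varLambda$ and $g_p$ depends only on $t\bmod p^k\varLambda$ with $\sum_{u\in\varLambda/p^k\varLambda}g_p(u)=p^{nk}-|\mathcal G/p^k\varLambda|$, so this average equals $\prod_{p\in S}\bigl(1-|\mathcal G/p^k\varLambda|/p^{nk}\bigr)$.

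The main obstacle is to let $S$ exhaust all primes, i.e.\ to exchange the density limit with the infinite sieve product, which needs a bound on the large-prime contribution that is uniform in the radius $R$. One uses $0\le\prod_{p\in S}g_p(t)-\prod_p g_p(t)\le\sum_{p\notin S}\sum_{s\in\mathcal G}\mathbf 1_{p^k\varLambda}(t+s)$; the finitely many $t$ with $t+s=0$ for some $s\in\mathcal G$ contribute nothing in the limit, and for $t+s\neq 0$ the number of $t\in\varLambda\cap B_R(0)$ with $t+s\in p^k\varLambda$ is $O(R^n/p^{nk})$ with an implied constant depending only on $\varLambda$ and $\rho$. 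Hence the difference between the ball averages of $\prod_{p\in S}g_p$ and of $\prod_p g_p$ is $O\bigl(\sum_{p\notin S}p^{-nk}\bigr)$, uniformly in $R$; since we excluded $n=k=1$ we have $nk\ge2$, so $\sum_p p^{-nk}<\infty$ and this tail tends to $0$ as $S$ grows. Squeezing the $\limsup$ and $\liminf$ of the ball averages of $\prod_p g_p$ between $\prod_{p\in S}\bigl(1-|\mathcal G/p^k\varLambda|/p^{nk}\bigr)$ and that quantity minus $O\bigl(\sum_{p\notin S}p^{-nk}\bigr)$, and letting $S$ grow, shows that the density exists and equals the convergent product $\prod_p\bigl(1-|\mathcal G/p^k\varLambda|/p^{nk}\bigr)$. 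This proves the formula.

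For positivity, choose $t_0\in\varLambda$ realising $\mathcal P$, so $t_0+s\notin p^k\varLambda$ for all $s\in\mathcal P$ and all primes $p$, while for each $s'\in\mathcal Q$ there is a prime $p_{s'}$ with $t_0+s'\in p_{s'}^k\varLambda$. With $\varLambda_0=\bigcap_{s'\in\mathcal Q}p_{s'}^k\varLambda$, a sublattice of finite index, every $t\in(t_0+\varLambda_0)\cap\bigcap_{s\in\mathcal P}(V-s)$ has $\rho$-patch exactly $\mathcal P$, and the same sieve, applied relative to the coset $t_0+\varLambda_0$ (the conditions at the primes dividing $[\varLambda:\varLambda_0]$ being automatic on that coset), gives its density as $[\varLambda:\varLambda_0]^{-1}\prod_{p\,\nmid\,[\varLambda:\varLambda_0]}\bigl(1-|\mathcal P/p^k\varLambda|/p^{nk}\bigr)$. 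This is strictly positive, because each factor is positive: since $t_0+s\notin p^k\varLambda$ for $s\in\mathcal P$, the residues of $\mathcal P$ modulo $p^k\varLambda$ omit the class $-t_0$, whence $|\mathcal P/p^k\varLambda|\le p^{nk}-1$; and the product converges as $\sum_p p^{-nk}<\infty$. Therefore $\nu(\mathcal P)>0$, which together with the formula completes the proof.
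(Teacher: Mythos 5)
The paper states this theorem without proof, simply citing \cite{PH}; your argument is correct and follows essentially the same route as that source: inclusion--exclusion over $\bigl(B_\rho(0)\cap\varLambda\bigr)\setminus\mathcal P$, a Chinese-Remainder sieve for the joint densities $\operatorname{dens}\bigl(\bigcap_{s\in\mathcal G}(V-s)\bigr)$ with a tail estimate that converges because $nk\ge 2$, and positivity via restriction to a congruence coset determined by a translate realising $\mathcal P$. Nothing further is needed.
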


\section{Diffraction}

Recall that the \emph{dual}\/ or \emph{reciprocal 
lattice}\/ $\varLambda^*$ of $\varLambda$ is
\[ 
\varLambda^*:=\{y \in\mathbb{R}^n\,\mid\, y\cdot x\in\mathbb Z
\mbox{ for all } x\in\varLambda\}.
\]
Further, the
\emph{denominator} of a point $y$ in the $\mathbb Q$-span $\mathbb
Q\varLambda^*$ of $\varLambda^*$ is defined as
$$
\operatorname{den}(y):=\min\{m\in\mathbb N\,\mid\,m y\in\varLambda^*\}.
$$

\begin{theorem}{\rm \cite[Thms.~3 and 5]{BMP} \cite[Thm.~8]{PH} \cite{BG}}\label{thdiff}
The natural diffraction measure $\widehat{\gamma}$ of the autocorrelation
$\gamma$ of\/ $V$ exists and is a positive, 
translation-bounded, pure-point measure
which is concentrated on the set of points in $\mathbb Q\varLambda^*$
with $(k+1)$-free denominator, the Fourier--Bohr spectrum of $\gamma$, 
and whose intensity is 
\[
\Bigg(\frac{1}{\zeta(nk)}\prod_{p\mid q}\frac{1}{p^{nk}-1}\Bigg)^2
\]
at any point with such
a denominator $q$.
\qed
\end{theorem}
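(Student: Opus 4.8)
This is essentially a re-packaging of the diffraction analysis of \cite{BMP,PH,BG}, and I would set up a self-contained argument as follows. First, one shows that for every $z\in\varLambda$ the autocorrelation coefficient $\eta(z):=\operatorname{dens}\bigl(V\cap(V+z)\bigr)$ exists, so that the autocorrelation $\gamma=\sum_{z\in\varLambda}\eta(z)\,\delta_z$ is well defined. A point $x\in\varLambda$ lies in $V\cap(V+z)$ precisely when, for every prime $p$, its image in $\varLambda/p^k\varLambda\cong(\ZZ/p^k\ZZ)^n$ avoids the images of $0$ and of $z$ (which coincide iff $z\in p^k\varLambda$). A Chinese Remainder Theorem together with an inclusion--exclusion argument of exactly the type underlying Theorem~\ref{freq} then gives the product formula
\[
\eta(z)=\prod_{p}\Bigl(1-\frac{c_p(z)}{p^{nk}}\Bigr),\qquad
c_p(z):=\bigl|\{0,z\}/p^k\varLambda\bigr|\in\{1,2\},
\]
with $c_p(z)=1$ exactly when $z\in p^k\varLambda$. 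Since $nk\ge2$ (the case $n=k=1$ being excluded), every factor is strictly positive and the product converges absolutely; in particular $\eta(0)=\prod_p(1-p^{-nk})=1/\zeta(nk)=\operatorname{dens}(V)$.

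Next, for a finite set $S$ of primes put $\eta_S(z):=\prod_{p\in S}\bigl(1-c_p(z)/p^{nk}\bigr)$. Each $\eta_S$ is periodic with period lattice $\bigl(\prod_{p\in S}p^k\bigr)\varLambda$, and $\eta_S\to\eta$ uniformly on $\varLambda$ as $S$ exhausts the primes --- this is where $nk\ge2$ is used once more. Hence $\gamma$ is a uniform limit of periodic, and therefore strongly almost periodic, measures, so $\gamma$ is itself strongly almost periodic. By the standard correspondence for translation-bounded positive-definite measures (cf.\ \cite{BMP} and references therein), this forces $\widehat\gamma$ to exist and to be a positive, translation-bounded, pure-point measure; positivity and translation boundedness are in any case automatic, $\gamma$ being the autocorrelation of the uniformly discrete set $V$.

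Since $\widehat\gamma$ is pure point, its mass at a point $y$ equals the Fourier--Bohr coefficient of $\eta$, i.e.\ the $\varLambda$-mean of $z\mapsto\eta(z)\ts e^{-2\pi\mathrm{i}\,y\cdot z}$, and by the previous step this is the limit of the analogous means of the $\eta_S$. For $y$ a character of the period group of $\eta_S$ --- equivalently $y\in\tfrac1{q_S}\varLambda^*/\varLambda^*$ with $q_S=\prod_{p\in S}p^k$ --- the Chinese Remainder Theorem factorises this finite Fourier coefficient as $\prod_{p\in S}\widehat{f_p}(y_p)$, where $f_p(z)=(1-2p^{-nk})+p^{-nk}\,\mathbf{1}_{p^k\varLambda}(z)$ is the $p$-local factor of $\eta$; hence $\widehat{f_p}(0)=(1-p^{-nk})^2$ while $\widehat{f_p}(y_p)=p^{-2nk}$ whenever $y_p\neq0$. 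If $\operatorname{den}(y)$ is not $(k+1)$-free, then $y$ is never such a character and every mean vanishes, so $\widehat\gamma(\{y\})=0$. If $\operatorname{den}(y)=q$ is $(k+1)$-free, choosing $S\supseteq\{p\mid q\}$ and letting $S$ grow yields
\[
\widehat\gamma(\{y\})=\prod_{p\mid q}\frac{1}{p^{2nk}}\prod_{p\nmid q}\Bigl(1-\frac{1}{p^{nk}}\Bigr)^{2}
=\Biggl(\frac{1}{\zeta(nk)}\prod_{p\mid q}\frac{1}{p^{nk}-1}\Biggr)^{2},
\]
using the Euler product $\prod_p(1-p^{-nk})^{2}=\zeta(nk)^{-2}$. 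Every such $y$ therefore carries strictly positive intensity, so $\widehat\gamma$ is concentrated exactly on the points of $\mathbb Q\varLambda^*$ with $(k+1)$-free denominator; this set is the Fourier--Bohr spectrum of $\gamma$, because the squared modulus of the Fourier--Bohr amplitude of $V$ at $y$ equals $\widehat\gamma(\{y\})$.

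The only step that is not essentially bookkeeping is the passage from ``$\gamma$ strongly almost periodic'' to ``$\widehat\gamma$ pure point with weights given by the limiting Fourier--Bohr coefficients'': here one has to invoke the general theory of almost periodic measures rather than compute directly. Convergence of all the infinite products, and the fact that every prime factor of a $(k+1)$-free denominator occurs to a power at most $k$ (so that the decomposition $y=\sum_p y_p$ reaches exactly these points and no others), are precisely what the hypothesis $nk\ge2$ and the structure of $\varLambda^*$ supply; with that machinery in place, the remainder is the Chinese Remainder Theorem and Euler's product formula for $\zeta(nk)$.
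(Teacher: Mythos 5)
The paper does not prove this theorem itself---it is quoted with citations to \cite{BMP}, \cite{PH} and \cite{BG}---and your argument is a correct reconstruction of the proof given there: periodic ($p$-local) approximation of the autocorrelation coefficients $\eta(z)=\prod_p\bigl(1-c_p(z)/p^{nk}\bigr)$, almost periodicity forcing pure point diffraction, and a Chinese-Remainder factorisation of the Fourier coefficients yielding exactly the stated support and intensity formula. The only place where the cited references do substantial work that you compress into a sentence is the existence of the limit defining $\eta(z)$ and its identification with the infinite product (uniform control of the large-prime contributions in the inclusion--exclusion), but the mechanism you name is the right one.
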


\begin{center}
\begin{figure}
\centerline{\epsfysize=0.48\textwidth\epsfbox{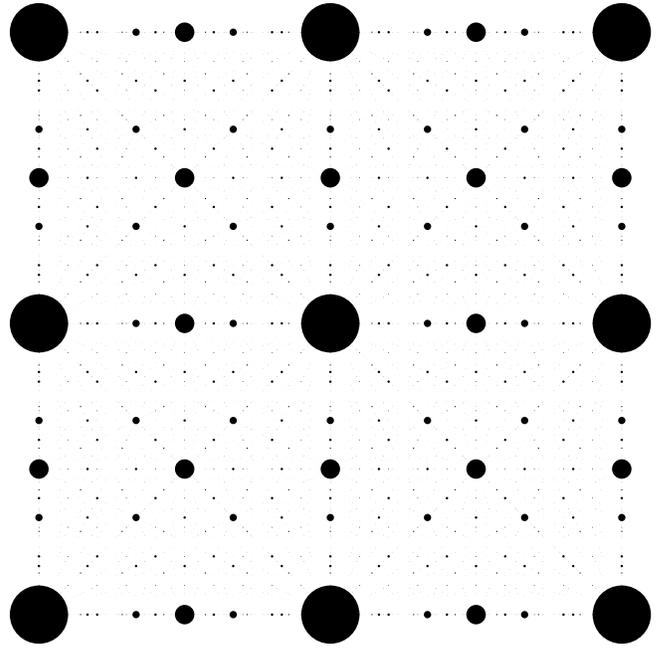}}
\caption{Diffraction $\widehat{\gamma}$ of the visible points of $\mathbb Z^2$. Shown are
the intensities with $I(y)/I(0)\ge 10^{-6}$ and $y\in [0,2]^2$. Its lattice of periods is $\mathbb Z^2$, and $\widehat{\gamma}$ turns out to
be $\operatorname{GL}(2,\mathbb Z)$-invariant.}
\label{fig:diff}
\end{figure}
\end{center}

\section{The hull of $V$}

Endowing the power set $\{0,1\}^{\varLambda}$ of the lattice $\varLambda$ with the
product topology of the discrete topology on $\{0,1\}$, it becomes a
compact topological space (by Tychonov's theorem). This
topology is in fact generated  by the metric $\rm d$ defined by
$$
{\rm d}(X,Y):=\min\Big\{1,\inf_{\epsilon >0}\{X\cap
  B_{1/\epsilon}(0)=Y\cap
  B_{1/\epsilon}(0)\}\Big\}
$$
for subsets $X,Y$ of $\varLambda$; cf.~\cite{Sol}. Then, $(\{0,1\}^{\varLambda},\varLambda)$ is
a \emph{topological dynamical system}, i.e.\ the natural translational
action of the group $\varLambda$
on $\{0,1\}^{\varLambda}$ is continuous. 

Let $X$ now be a subset of
$\varLambda$. The closure 
$$\mathbb X(X):=\overline{\{t+X\,\mid\,t\in \varLambda\}}$$ of the set of 
lattice translations $t+X$  of $X$ in
$\{0,1\}^{\varLambda}$ is the \emph{$($discrete\/$)$ hull} of $X$ and gives rise to the topological dynamical system
$(\mathbb X(X),\varLambda)$, i.e.\ $\mathbb X(X)$ is a compact topological space on which
the action of $\varLambda$ is continuous. 

By construction of the hull, Proposition~\ref{holes} implies

\begin{lemma}\label{holes2}
For any $r>0$ and any element $X\in\mathbb X(V)$, the set of centres of holes in $X$ of inradius at least $r$
contains a coset of $m^k\varLambda$ in $\varLambda$ 
for some $m\in\mathbb N$.
\qed
\end{lemma}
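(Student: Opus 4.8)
The plan is to bootstrap from Proposition~\ref{holes}, which is precisely the assertion of the lemma for the set $V$ itself, to arbitrary elements of the hull, using that the $\varLambda$-orbit of $V$ is dense in $\mathbb X(V)$ and that the relevant hole property, though a priori non-local, turns out to be a closed condition on $\{0,1\}^{\varLambda}$.

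First I would fix $r>0$ and use Proposition~\ref{holes} to choose $m\in\mathbb N$ together with a coset $\mathcal C$ of $m^k\varLambda$ all of whose points are centres of holes in $V$ of inradius at least $r$. Since $m^k\varLambda$ has finite index in $\varLambda$, there are only finitely many cosets of $m^k\varLambda$; I would list them as $\mathcal C_1,\dots,\mathcal C_\ell$. For every $t\in\varLambda$, the translate $t+V$ then has all points of the coset $t+\mathcal C$ as centres of holes of inradius at least $r$, and $t+\mathcal C$ is again one of the $\mathcal C_i$; hence every element of the $\varLambda$-orbit of $V$ has the desired property with respect to one of the finitely many cosets $\mathcal C_1,\dots,\mathcal C_\ell$.

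Next, for $1\le i\le\ell$ I would introduce the set $A_i\subset\{0,1\}^{\varLambda}$ of all $Y$ such that every point of $\mathcal C_i$ is the centre of a hole in $Y$ of inradius at least $r$. The key observation is that whether a fixed lattice point $s$ is the centre of such a hole depends only on the restriction of $Y$ to the finite set $B_r(s)\cap\varLambda$, so each condition ``$s$ is the centre of a hole in $Y$ of inradius $\ge r$'' defines a clopen cylinder set; since $A_i$ is the intersection of these over all $s\in\mathcal C_i$, it is closed, and so is $A:=A_1\cup\dots\cup A_\ell$. By the previous step, the $\varLambda$-orbit of $V$ is contained in the closed set $A$, whence $\mathbb X(V)=\overline{\{t+V\mid t\in\varLambda\}}\subseteq A$. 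It then follows that every $X\in\mathbb X(V)$ lies in some $A_i$, i.e.\ the coset $\mathcal C_i$ of $m^k\varLambda$ consists entirely of centres of holes in $X$ of inradius at least $r$, as required.

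The main point to be handled with care is the passage from the genuinely global statement ``there is a whole coset of hole-centres'' to a closed condition on the hull: this succeeds only because $m$, and hence the finite list of candidate cosets, can be taken uniformly for the entire orbit, so that the existential quantifier over cosets ranges over a finite set and each $A_i$ is individually closed. Once this is set up, the remaining ingredients --- finiteness of the index $[\varLambda:m^k\varLambda]$, the cylinder-set description of the hole condition, and density of the orbit in the hull --- are routine.
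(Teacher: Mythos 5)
Your proof is correct and fills in exactly the argument the paper leaves implicit: the paper deduces Lemma~\ref{holes2} from Proposition~\ref{holes} ``by construction of the hull'', and your passage from the orbit of $V$ to its closure --- using that $m$ can be chosen uniformly for the whole orbit, that the candidate cosets of $m^k\varLambda$ form a finite list, and that each hole condition at a fixed centre is a clopen (finitely determined) condition --- is precisely the standard way to make that one-line deduction rigorous. No gaps.
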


For a $\rho$-patch $\mathcal P$ of $V$, denote by $C_\mathcal{P}$ the set of elements of $\mathbb X(V)$ whose $\rho$-patch at
$0$ is $\mathcal P$, the so-called \emph{cylinder set} defined by the
$\rho$-patch $\mathcal P$. Note that these cylinder sets form a
basis of the topology of $\mathbb X(V)$.

It is clear from the existence of holes of unbounded inradius in $V$ that
$\mathbb X(V)$ contains the empty set  (the configuration of $0$ on
every lattice point). Denote by
$\mathbb A$
the set of \emph{admissible} subsets $A$
of $\varLambda$, i.e.\ subsets $A$ of $\varLambda$ having the property that, 
for every prime $p$, $A$ does \emph{not} contain a full set of
representatives modulo $p^k\varLambda$. In other words, $A$ is
admissible if and only if 
$|A/p^k\varLambda|<p^{nk}$ for any prime $p$, where
$A/p^k\varLambda$ denotes the set of cosets of $p^k\varLambda$ in $\varLambda$ that are
represented in $A$. Since $V\in\mathbb A$ (otherwise some point of $V$ is in $p^k\varLambda$
for some prime $p$, a contradiction) and since $\mathbb A$ is a $\varLambda$-invariant and
closed subset of $\{0,1\}^{\varLambda}$, it is clear that $\mathbb X(V)$
is a subset of $\mathbb A$. By~\cite[Thm.~2]{PH}, the other inclusion is also
true. One thus obtains the
following characterisation of the hull of $V$, which was first shown by Sarnak~\cite{Sarnak} for the
special case of the square-free integers.

\begin{theorem}{\rm \cite[Thm.~6]{PH}}
One has $\mathbb X(V)=\mathbb A$.
\qed
\end{theorem}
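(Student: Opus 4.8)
The plan is as follows. The inclusion $\mathbb X(V)\subseteq\mathbb A$ has already been established, so I would focus on $\mathbb A\subseteq\mathbb X(V)$. Since the cylinder sets $C_{\mathcal P}$ form a basis of the topology of $\{0,1\}^{\varLambda}$ and $\mathbb X(V)$ is, by construction, a closure, it suffices to prove that \emph{every admissible $\rho$-patch} $\mathcal P\subseteq B_\rho(0)\cap\varLambda$ actually occurs as a $\rho$-patch of $V$: for $A\in\mathbb A$ and any $\rho$, the patch $\mathcal P:=A\cap B_\rho(0)$ is admissible (admissibility passes to subsets), so if $\mathcal P=(V-t)\cap B_\rho(0)$ for some $t\in\varLambda$ then the lattice translate $V-t$ agrees with $A$ on $B_\rho(0)$, and letting $\rho\to\infty$ forces $A\in\mathbb X(V)$. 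Writing $\mathcal Q:=(B_\rho(0)\cap\varLambda)\setminus\mathcal P$, the task therefore reduces to producing $t\in\varLambda$ with $x+t\in V$ for every $x\in\mathcal P$ and $y+t\notin V$ for every $y\in\mathcal Q$; recall that $z\in V$ if and only if $z\notin p^{k}\varLambda$ for all primes $p$.

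I would build $t$ with the Chinese Remainder Theorem, exploiting $\varLambda/p^{k}\varLambda\cong(\ZZ/p^{k}\ZZ)^{n}$. Fix a threshold $N$, to be chosen large at the end. First, for each $y\in\mathcal Q$ pick a prime $p_y>N$ with $p_y^{k}\varLambda\cap B_{2\rho}(0)=\{0\}$, the $p_y$ pairwise distinct, and require $t\equiv-y\pmod{p_y^{k}\varLambda}$; this puts $y+t\in p_y^{k}\varLambda$, hence $y+t\notin V$, while $x+t\equiv x-y\pmod{p_y^{k}\varLambda}$ with $0\neq x-y\in B_{2\rho}(0)$ shows $x+t\notin p_y^{k}\varLambda$ for $x\in\mathcal P$. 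Second, for each prime $p\le N$, admissibility of $\mathcal P$ gives $|\mathcal P/p^{k}\varLambda|<p^{nk}$, so $\{-x\bmod p^{k}\varLambda:x\in\mathcal P\}$ is a proper subset of $\varLambda/p^{k}\varLambda$; choosing a coset outside it and imposing the matching congruence on $t$ yields $x+t\notin p^{k}\varLambda$ for all $x\in\mathcal P$. These finitely many congruences sit on pairwise coprime moduli, so the Chinese Remainder Theorem gives a common solution $t_0$, unique modulo $m^{k}\varLambda$ with $m=\prod_{p\le N}p\cdot\prod_{y\in\mathcal Q}p_y$, and every $t\in t_0+m^{k}\varLambda$ keeps all of the above.

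What is left is to pin down $t$ inside the coset $t_0+m^{k}\varLambda$ so that, in addition, $x+t\notin q^{k}\varLambda$ for every $x\in\mathcal P$ and every prime $q\nmid m$ (necessarily $q>N$); combined with the earlier conditions this gives $x+t\in V$ for all $x\in\mathcal P$ and finishes the proof. Here I would use a sieve estimate: for fixed $x$ and such a $q$, the condition $x+t\in q^{k}\varLambda$ is a single residue class of $t$ modulo $q^{k}\varLambda$, and since $\gcd(m,q)=1$ the coset $t_0+m^{k}\varLambda$ surjects onto $\varLambda/q^{k}\varLambda$, so this class meets it in relative density $q^{-nk}$. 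Hence the "bad" $t$ in $t_0+m^{k}\varLambda$ have upper density at most $|\mathcal P|\sum_{q>N}q^{-nk}$. Because $nk\ge2$ (the case $n=k=1$ is excluded), $\sum_{q}q^{-nk}$ converges, so this bound drops below $1$ once $N$ is taken large enough; with such an $N$ fixed, a "good" $t$ exists and does what is required.

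I expect the last step to be the main obstacle: the primes $q>N$ are not controlled by any congruence, and one must argue that the residual freedom inside the $m^{k}\varLambda$-coset is enough to dodge all of them simultaneously. The convergence of $\sum_q q^{-nk}$ is what makes this go through, and this is exactly the point at which the hypothesis $nk\ge2$ is needed. Turning the density bound into a clean existence statement is routine, using that all but finitely many $q$ satisfy $q^{k}\varLambda\cap B_{R}(0)=\{0\}$ for any fixed $R$, so that their combined contribution inside a large ball is negligible; the whole argument reproduces, in this generality, the reasoning behind \cite[Thm.~2]{PH}.
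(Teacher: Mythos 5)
Your proposal is correct and follows essentially the same route as the paper: the easy inclusion $\mathbb X(V)\subseteq\mathbb A$ via closedness and $\varLambda$-invariance of $\mathbb A$, and the reverse inclusion by showing that every admissible finite patch occurs as a patch of $V$, which is exactly the content (and the Chinese-Remainder-plus-sieve proof, with convergence of $\sum_q q^{-nk}$ for $nk\ge 2$) of the result \cite[Thm.~2]{PH} that the paper cites for this step.
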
 

In particular, $\mathbb X(V)$ contains
\emph{all} subsets of $V$ (and their translates). In other words, $V$
is an interpolating set for $\mathbb X(V)$ in the sense
of~\cite{W}, i.e. $$\mathbb X(V)|^{}_V\,\,:=\{X\cap V\,\mid\,
X\in\mathbb X(V)\}=\{0,1\}^V.$$
It follows that $V$ has patch counting entropy at least
$\operatorname{dens}(V)=1/\zeta(nk)$. In fact, one has more.

\begin{theorem}{\rm \cite[Thm.~3]{PH}~\cite[Thm.~1]{BLR}}\label{hpc}
One has $h_{\rm pc}(V)=1/\zeta(nk)$. Moreover, $h_{\rm pc}(V)$ coincides with the
topological entropy of the dynamical system $(\mathbb X(V),\varLambda)$.
\qed
\end{theorem}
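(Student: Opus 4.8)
The plan is to establish the two inequalities $h_{\rm pc}(V)\ge 1/\zeta(nk)$ and $h_{\rm pc}(V)\le 1/\zeta(nk)$ and then to identify the common value with the topological entropy of $(\mathbb X(V),\varLambda)$ by a soft, general argument. The lower bound is already available from the discussion above: since $V$ is an interpolating set for $\mathbb X(V)$, every subset of $V\cap B_\rho(0)$ belongs to $\mathbb X(V)$ and hence occurs as its own $\rho$-patch of $V$ at $0$ (it is contained in $B_\rho(0)$), so $N(\rho)\ge 2^{|V\cap B_\rho(0)|}$; dividing by $v_n\rho^n$ and using $\operatorname{dens}(V)=1/\zeta(nk)$ gives $h_{\rm pc}(V)\ge 1/\zeta(nk)$.

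For the upper bound I would first record the exact description
\[
\mathcal A(\rho)=\{\,\mathcal P\subseteq B_\rho(0)\cap\varLambda\;:\;\mathcal P\text{ is admissible}\,\},
\]
where admissibility means $|\mathcal P/p^k\varLambda|<p^{nk}$ for every prime $p$. Here ``$\subseteq$'' holds because a $\rho$-patch of $V$ is the restriction to $B_\rho(0)$ of a translate of $V$, and admissibility is inherited by subsets; ``$\supseteq$'' holds because an admissible $\mathcal P\subseteq B_\rho(0)\cap\varLambda$ lies in $\mathbb A=\mathbb X(V)$, so it is a limit of translates of $V$ and therefore agrees with some translate of $V$ on $B_\rho(0)$, making it a $\rho$-patch of $V$. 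Hence $N(\rho)$ equals the number of admissible subsets of $W:=B_\rho(0)\cap\varLambda$.

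Now fix primes $p_1<\dots<p_s$ and put $Q:=p_1\cdots p_s$, so that $\varLambda/Q^k\varLambda\cong\prod_{i=1}^s\varLambda/p_i^k\varLambda$ by the Chinese Remainder Theorem. Partitioning $W$ into the coset classes $W_c$ of $Q^k\varLambda$ and using unimodularity of $\varLambda$, one has $|W_c|=(1+o(1))\,|W|/Q^{nk}$ uniformly in $c$ as $\rho\to\infty$. For an admissible $\mathcal P\subseteq W$, its coset support $T(\mathcal P):=\{c:\mathcal P\cap W_c\ne\emptyset\}$ must project onto a proper subset of $\varLambda/p_i^k\varLambda$ for each $i$, whence $|T(\mathcal P)|\le\prod_{i=1}^s(p_i^{nk}-1)$. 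Grouping subsets of $W$ by their support, and bounding the number of admissible supports crudely by $2^{Q^{nk}}$, we obtain
\[
N(\rho)\;\le\;2^{Q^{nk}}\,2^{(1+o(1))\,|W|\prod_{i=1}^s(1-p_i^{-nk})} .
\]
Dividing by $v_n\rho^n$, letting $\rho\to\infty$ (so $Q^{nk}/(v_n\rho^n)\to0$ and $|W|/(v_n\rho^n)\to1$), and then letting $s\to\infty$ yields $h_{\rm pc}(V)\le\prod_p(1-p^{-nk})=1/\zeta(nk)$; together with the lower bound this gives $h_{\rm pc}(V)=1/\zeta(nk)$. The main obstacle is exactly this step: one must organise the count so that \emph{every} prime contributes the factor $1-p^{-nk}$ simultaneously, which is achieved here by grouping subsets of the ball according to their coset pattern modulo $Q^k\varLambda$ and observing that the number of admissible patterns stays bounded as $\rho\to\infty$ and is therefore invisible to the exponential rate.

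Finally, to identify $h_{\rm pc}(V)$ with the topological entropy of $(\mathbb X(V),\varLambda)$: since $\varLambda\cong\mathbb Z^n$ is amenable and the balls $B_\rho(0)\cap\varLambda$ form a (tempered) F{\o}lner sequence, the topological entropy of the subshift $\mathbb X(V)\subseteq\{0,1\}^\varLambda$ equals $\lim_{\rho\to\infty}\bigl(\log_2|\{X\cap B_\rho(0):X\in\mathbb X(V)\}|\bigr)/|B_\rho(0)\cap\varLambda|$, the limit being independent of the chosen F{\o}lner sequence by the Ornstein--Weiss lemma. By the description of $\mathcal A(\rho)$ above, $\{X\cap B_\rho(0):X\in\mathbb X(V)\}=\mathcal A(\rho)$, so this limit is precisely $\lim_\rho(\log_2 N(\rho))/(v_n\rho^n)=h_{\rm pc}(V)$; cf.~\cite{BLR}.
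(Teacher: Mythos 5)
Your proposal is correct and follows essentially the route of the sources the paper cites for this statement (\cite{PH} for the value of $h_{\rm pc}(V)$ and \cite{BLR} for its identification with the topological entropy of the hull); the paper itself quotes the theorem without proof. In particular, the lower bound via the interpolation property, the identification of the $\rho$-patches with the admissible subsets of $B_\rho(0)\cap\varLambda$ using $\mathbb X(V)=\mathbb A$, and the upper bound obtained by grouping subsets according to their coset pattern modulo $Q^k\varLambda$ for a finite product of primes $Q$ and then letting the number of primes grow is exactly the standard argument.
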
 
 
\section{Topological dynamics}  

By construction, $(\mathbb X(V),\varLambda)$ is topologically
transitive~\cite{A,G,W}, as it is the orbit closure of one of its
elements (namely $V$). Equivalently, for any two non-empty open subsets $U$
and $W$ of $\mathbb X(V)$, there is an element $t\in\varLambda$ such
that
$$
U\cap (W+t)\neq\varnothing.
$$
In accordance with Sarnak's findings~\cite{Sarnak} for
square-free integers, one has the following results.

\begin{theorem}\label{c1}
The topological dynamical system $(\mathbb X(V),\varLambda)$ has the following properties.
\begin{itemize}
\item[\rm (a)]
$(\mathbb X(V),\varLambda)$ is topologically ergodic with positive topological 
entropy equal to $1/\zeta(nk)$.
\item[\rm (b)]
$(\mathbb X(V),\varLambda)$ is proximal $($i.e., for any $X,Y\in\mathbb
X(V)$ one has $\inf_{t\in\varLambda}{\rm d}(X+t,Y+t)=0$$)$ and $\{\varnothing\}$ is
the unique $\varLambda$-minimal subset of $\mathbb
X(V)$.
\item[\rm (c)]
$(\mathbb X(V),\varLambda)$ has no non-trivial topological Kronecker
factor $($i.e., minimal equicontinuous factor\/$)$. In particular, $(\mathbb
X(V),\varLambda)$ has trivial topological point spectrum.
\item[\rm (d)]
$(\mathbb X(V),\varLambda)$ has a non-trivial joining with the Kronecker system $K=(G,\varLambda)$, where $G$ is the compact
Abelian group $\prod_p (\varLambda/p^k\varLambda)$ and $\varLambda$ acts on $G$
via addition on the diagonal, $g\mapsto
g+(\bar{x},\bar{x},\dots)$, with $g\in G$ and $x\in\varLambda$. In
particular, $(\mathbb X(V),\varLambda)$ fails to be topologically weakly mixing. 
\end{itemize}
\end{theorem}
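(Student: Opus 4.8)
The plan is to establish the four items essentially by transporting Sarnak's arguments for square-free integers into the lattice setting, using the characterisation $\mathbb X(V)=\mathbb A$ together with the explicit frequency formula of Theorem~\ref{freq} and the entropy computation of Theorem~\ref{hpc}. For part~(a), topological ergodicity (i.e.\ every $\varLambda$-invariant closed set is either nowhere dense or has nonempty interior, equivalently the existence of a transitive point whose orbit meets every nonempty open set — or, in the formulation one usually wants, that the union of the orbits of generic points is dense) follows from topological transitivity plus the fact that the frequency measure $\nu$ from \eqref{freqdef} is fully supported: since every $\rho$-patch $\mathcal P$ of $V$ has $\nu(\mathcal P)>0$ by Theorem~\ref{freq}, every nonempty basic cylinder $C_{\mathcal P}$ has positive measure, and the ergodicity of $\nu$ (which I would prove along the way, or cite from the Cellarosi--Sinai and Sarnak analogues suitably adapted) upgrades transitivity to topological ergodicity. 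The entropy value $1/\zeta(nk)$ is exactly Theorem~\ref{hpc}, and positivity is immediate since $nk\ge 2$ in all non-excluded cases, so $\zeta(nk)<\infty$.

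For part~(b), proximality is where the `holes' do the work. Given $X,Y\in\mathbb X(V)=\mathbb A$ and any radius $R$, I want $t\in\varLambda$ with $(X+t)\cap B_R(0)=(Y+t)\cap B_R(0)$; it suffices to find $t$ making both $X+t$ and $Y+t$ empty on $B_R(0)$, i.e.\ $t$ a common hole-centre of large inradius for both $X$ and $Y$. By Lemma~\ref{holes2}, the hole-centres of inradius $\ge R$ in $X$ contain a coset of $m_1^k\varLambda$ and those in $Y$ a coset of $m_2^k\varLambda$; these two cosets (with respect to the sublattices $m_1^k\varLambda$ and $m_2^k\varLambda$) intersect by the Chinese Remainder Theorem applied to the prime power components of $m_1,m_2$ — exactly as in Proposition~\ref{holes} — so a common $t$ exists, and letting $R\to\infty$ gives $\inf_t {\rm d}(X+t,Y+t)=0$. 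Since $\{\varnothing\}$ is visibly a $\varLambda$-invariant singleton (hence minimal) and is contained in the closure of every orbit by the same hole argument, and since proximality forces every minimal set to be a single point, uniqueness of $\{\varnothing\}$ as the minimal subset follows: any minimal set $M$ contains a fixed point $X_0$ with $X_0=X_0+t$ for all $t$, and non-periodicity of nonempty elements of $\mathbb A$ (a translate of an admissible set is admissible and nonempty admissible sets have no full period by the same argument showing $V$ non-periodic) forces $X_0=\varnothing$.

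For parts~(c) and~(d), I would use the structure of the maximal equicontinuous factor. A minimal equicontinuous factor is an action on a compact abelian group by a dense subgroup (a Kronecker system); a topologically transitive proximal system can have no non-trivial such factor, because proximality descends to the factor while an equicontinuous (in particular distal) nontrivial minimal system is never proximal — so (c) follows formally from transitivity, proximality, and (b). Triviality of the topological point spectrum is then immediate, as continuous eigenfunctions generate equicontinuous factors. Part~(d) is the main obstacle and the only genuinely new construction: I must exhibit a non-trivial joining of $(\mathbb X(V),\varLambda)$ with the Kronecker system $K=(G,\varLambda)$, $G=\prod_p(\varLambda/p^k\varLambda)$. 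The natural candidate is the graph-type joining carried by the map $\phi\colon V+t\mapsto (\,\overline{t},\overline{t},\dots\,)\in G$, i.e.\ recording for $X\in\mathbb X(V)$ the tuple of residues $\overline t$ modulo each $p^k\varLambda$ that make $X$ avoid $p^k\varLambda+t$; the point is that admissibility ($|A/p^k\varLambda|<p^{nk}$) guarantees, for each $p$, at least one forbidden residue, and the joining measure is obtained by pushing forward $\nu$ under $X\mapsto(X,\text{a measurable selection of such residues})$, whose projection to $G$ is Haar measure by equidistribution of the relevant residues (this is where the density/frequency computations and a Chinese-Remainder-type independence across distinct primes enter). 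Verifying that this defines a closed $\varLambda$-invariant subset of $\mathbb X(V)\times G$ projecting onto both factors, and that it is non-trivial (not the product joining), is the crux; non-triviality then immediately rules out topological weak mixing, since a weakly mixing system admits no non-trivial joining with any non-trivial equicontinuous system. I expect the correspondence between forbidden residues and points of $G$, and the measurability of the selection, to be the delicate bookkeeping, but conceptually it mirrors Sarnak's square-free construction verbatim with $p$ replaced by $p^k$ and $\ZZ$ by $\varLambda$.
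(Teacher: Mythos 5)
Your overall strategy coincides with the paper's for parts (b)--(d): proximality and the unique minimal set come from the holes of Lemma~\ref{holes2}; part (c) follows from distality of Kronecker systems plus the continuity argument for eigenfunctions; and your candidate joining for (d) --- pairing $X$ with the residues $g_p$ for which the coset $g_p+p^k\varLambda$ avoids $X$, guaranteed non-empty by admissibility --- is exactly the paper's set $W$, with the same appeal to Furstenberg's disjointness of weakly mixing systems from minimal distal ones. Two steps, however, do not go through as written. First, in part (a) you derive topological ergodicity from transitivity together with ergodicity of the frequency measure $\nu$, which you explicitly defer. Ergodicity of $\nu$ is a substantially harder statement (it is part of the later measure-theoretic theorem, proved via a CRT computation with the frequency formula), so as it stands your part (a) rests on an unproved input. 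The paper's argument is much lighter and does not use any invariant measure on the hull: the positive-upper-density formulation of topological ergodicity is verified directly on cylinder sets, using only that some translate $V+s$ lies in $C_{\mathcal P}$ and that the set of $t$ with $V-t\in C_{\mathcal Q}$ has density $\nu(\mathcal Q)>0$ by Theorem~\ref{freq}.

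Second, in part (b) the claim that the coset of $m_1^k\varLambda$ of hole-centres for $X$ and the coset of $m_2^k\varLambda$ of hole-centres for $Y$ ``intersect by the Chinese Remainder Theorem'' is false in general: $a+m_1^k\varLambda$ and $b+m_2^k\varLambda$ are disjoint unless $a-b$ lies in $m_1^k\varLambda+m_2^k\varLambda$; for instance, two distinct cosets of the same sublattice $m^k\varLambda$ never meet. The bare conclusion of Lemma~\ref{holes2} therefore does not supply a common hole-centre. The repair is to re-enter the proof of Proposition~\ref{holes}: for each of the finitely many points of $B_\rho(0)\cap\varLambda$ impose one congruence coming from a forbidden residue class of $X$ and one coming from a forbidden residue class of $Y$, choosing all primes involved pairwise distinct; only then does CRT produce a single $t$ emptying both patches. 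This is the freedom you need to invoke explicitly. (Your uniqueness argument for the minimal set, via the standard fact that a proximal system over an abelian group has a unique minimal set which is a fixed point, is correct but heavier than the paper's direct observation that every orbit closure contains $\varnothing$.)
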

\begin{proof}
The positivity of the topological entropy follows from
Theorem~\ref{hpc} since $1/\zeta(nk)>0$. For the
topological ergodicity~\cite{A}, one has to show that, for any two non-empty
open subsets $U$ and $W$ of $\mathbb X(V)$, one has 
\begin{equation}\label{tergod}
\limsup_{R\to\infty}\frac{\sum_{t\in\varLambda\cap B_R(0)}\theta\big(U\cap
  (W+t)\big)}{v_nR^n}>0,
\end{equation}
where $\theta(\varnothing)=0$ and $\theta(A)=1$ for non-empty subsets
$A$ of $\mathbb X(V)$. It certainly suffices to verify~\eqref{tergod}
for cylinder sets. To this end, let $\mathcal P$ and $\mathcal Q$ be
patches of $V$. Then, a suitable translate $V+s$ is an element of $C_\mathcal P$. Since
\begin{eqnarray*}
&&\hspace{-2em}\limsup_{R\to\infty}\frac{\sum_{t\in\varLambda\cap
    B_R(0)}\theta\big(C_\mathcal P\cap
  (C_\mathcal Q+t)\big)}{v_nR^n}\\
&\ge&\limsup_{R\to\infty}\frac{\sum_{t\in\varLambda\cap
    B_R(0)}\theta\big(\{V+s\}\cap
  (C_\mathcal Q+t)\big)}{v_nR^n}\\
&=&\limsup_{R\to\infty}\frac{\sum_{t\in\varLambda\cap
    B_R(0)}\theta\big(\{V\}\cap
  (C_\mathcal Q+t)\big)}{v_nR^n}\,=\,\nu(\mathcal Q),
\end{eqnarray*}
the assertion follows from Theorem~\ref{freq}. This proves (a).

For part (b), one can easily derive from Lemma~\ref{holes2} that, for any $\rho>0$
and any two
elements $X,Y\in\mathbb
X(V)$, there is a translation $t\in\varLambda$ such that
$$(X+t)\cap B_\rho(0)=(Y+t)\cap B_\rho(0)=\varnothing,$$ i.e.\ both $X$ and $Y$
have the empty $\rho$-patch at $-t$. It follows that ${\rm d}(X+t,Y+t)\le
1/\rho$ and thus the proximality of the system follows. Similarly, the assertion on the unique $\varLambda$-minimal
subset $\{\varnothing\}$ follows from the fact that any element of $\mathbb
X(V)$ contains arbitrarily large `holes' and thus any non-empty
subsytem contains $\varnothing$. 

Since Kronecker systems are distal, the first assertion of part (c) is an immediate consequence of the
proximality of $(\mathbb X(V),\varLambda)$. Although this immediately
implies that $(\mathbb X(V),\varLambda)$ has trivial
topological point spectrum, we add the following independent argument. Let $f\!:\, \mathbb
X(V)\longrightarrow \mathbb C$ be a continuous eigenfunction, in
particular $f\not\equiv 0$.
Let $\lambda_t\in\mathbb C$ be the eigenvalue with respect to $t\in\varLambda$,
i.e.\ $f(X-t)=\lambda_t f(X)$ for any $X\in\mathbb X(V)$, in
particular
\begin{equation}\label{emptyset}
f(\varnothing)=\lambda_tf(\varnothing).
\end{equation} 
Since
$\varLambda$ acts by homeomorphisms on the compact space $\mathbb X(V)$ and since $(\mathbb
X(V),\varLambda)$ is topologically transitive, it is clear that
$|\lambda_t|=1$ and that $|f|$ is a non-zero constant. We shall now show
that even $\lambda_t=1$ for any $t$ and that $f$ itself is a non-zero constant. By
Lemma~\ref{holes2}, for any $X\in\mathbb X(V)$, one can choose a sequence $(t_n)_{n\in\mathbb N}$
in $\varLambda$ such that $\lim_{n\rightarrow
  \infty}(X-t_n)=\varnothing$. Since $f$ is continuous, we have 
\begin{equation}\label{emptyset2}f(\varnothing)=\lim_{n\rightarrow
    \infty}f(X-t_n)=\lim_{n\rightarrow \infty}\lambda_{t_n}f(X).
\end{equation}  
Assuming
that $f(\varnothing)=0$ thus implies $f\equiv 0$, a
contradiction. Hence $f(\varnothing)\neq 0$ and $\lambda_t=1$ for any
$t\in\varLambda$ by~\eqref{emptyset}. Further,  by~\eqref{emptyset2},
one has 
$f(X)=f(\varnothing)$ for any $X\in\mathbb X(V)$.    

For part (d), one can verify that a non-trivial joining~\cite{G} 
of $(\mathbb X(V),\varLambda)$ with the Kronecker system $K$ is given
by 
$$
W:=\bigcup_{X\in\mathbb X(V)}\Big(\{X\}\times \prod_p
(\varLambda\setminus X) /p^k\varLambda\Big).
$$
Since the Kronecker system $K$ is minimal and distal, a well known
disjointness theorem by Furstenberg~\cite[Thm. II.3]{F} implies that
$(\mathbb X(V),\varLambda)$ fails to be topologically weakly mixing.
\end{proof}

\section{Measure-theoretic dynamics}

The frequency function $\nu$ from~\eqref{freqdef}, regarded as a function on the
cylinder sets by setting $\nu(C_\mathcal
P):=\nu(\mathcal P)$, is finitely additive on the
cylinder sets with $$\nu(\mathbb X(V))=\sum_{\mathcal P\in\mathcal
  A(\rho)}\nu(C_{\mathcal P})=|\det(\varLambda)|=1.$$ Since the
family of cylinder sets is a (countable) semi-algebra that generates the
Borel $\sigma$-algebra on $\mathbb X(V)$ (i.e.\ the smallest 
$\sigma$-algebra on $\mathbb X(V)$ which contains the open subsets of
$\mathbb X(V)$), it extends uniquely to a probability measure on
$\mathbb X(V)$; cf.~\cite[\S 0.2]{Walters}. Moreover, this probability measure is
$\varLambda$-invariant by construction. For part (b) of the following
claim, note that, in the case of $V$,  
the Fourier--Bohr spectrum is itself a group and
compare~\cite[Prop. 17]{BLvE}. Turning to the measure-theoretic
dynamical system $(\mathbb X(V),\varLambda,\nu)$, one has

\begin{theorem}$(\mathbb X(V),\varLambda,\nu)$ has the following properties.
\begin{itemize}
\item[\rm (a)]
The $\varLambda$-orbit of $V$ in $\mathbb X(V)$ is $\nu$-equidistributed,
i.e., for any function $f\in C(\mathbb X(V))$, one has
\[
\lim_{R\to\infty}\frac{1}{v_nR^n}\sum_{x\in\varLambda\cap
  B_R(0)}f(V+x)=\int_{\mathbb X(V)}f(X)\,\,{\rm d}\nu(X).
\]
In other words, $V$ is $\nu$-generic.
\item[\rm (b)]
$(\mathbb X(V),\varLambda,\nu)$ is ergodic, deterministic
$($i.e., it is of zero measure entropy\/$)$ and has pure-point dynamical spectrum given by
the Fourier--Bohr spectrum of the autocorrelation $\gamma$, as
described in Theorem~$\ref{thdiff}$.
\item[\rm (c)]
The Kronecker system $K_{\nu}=(X_K,\varLambda,\nu)$, where $X_K$ is the
compact Abelian 
group $\prod_p (\varLambda/p^k\varLambda)$, $\varLambda$ acts on $X_K$ via
addition on the diagonal $($cf.\ Theorem\/~$\ref{c1}(\rm{d}))$ and $\nu$ is
Haar measure on $X_K$, is metrically 
isomorphic to $(\mathbb X(V),\varLambda,\nu)$.
\end{itemize}
\end{theorem}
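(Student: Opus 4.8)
The plan is to read part~(a) off Theorem~\ref{freq}, to prove part~(c) by writing down an explicit measure-theoretic isomorphism, and to obtain part~(b) from~(c) together with standard facts about compact group rotations.

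For~(a), I would first reduce to the case $f=\mathbf{1}_{C_{\mathcal P}}$, the indicator of a cylinder set: finite $\mathbb C$-linear combinations of such indicators are uniformly dense in $C(\mathbb X(V))$, since $\mathbb X(V)$ is compact, metric and totally disconnected with the $C_{\mathcal P}$ a countable basis of clopen sets, and both sides of the claimed identity are stable, uniformly in $R$, under uniform approximation of $f$. For such an $f$ and a $\rho$-patch $\mathcal P$, the average $\frac{1}{v_nR^n}\sum_{x\in\varLambda\cap B_R(0)}\mathbf{1}_{C_{\mathcal P}}(V+x)$ is $\frac{1}{v_nR^n}$ times the number of $x\in\varLambda\cap B_R(0)$ with $(V+x)\cap B_\rho(0)=\mathcal P$; substituting $x\mapsto-x$ (the ball being centred at $0$), this equals $\frac{1}{v_nR^n}\,|\{t\in\varLambda\cap B_R(0)\mid (V-t)\cap B_\rho(0)=\mathcal P\}|$, whose limit exists and equals $\nu(\mathcal P)=\int_{\mathbb X(V)}\mathbf{1}_{C_{\mathcal P}}\,\mathrm{d}\nu$ by Theorem~\ref{freq}. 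So~(a) is essentially a reformulation of the existence and positivity of patch frequencies.

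For~(c), the idea is to realise the isomorphism concretely through the map
\[
\psi\colon X_K\longrightarrow\{0,1\}^{\varLambda},\qquad \psi(\omega):=\varLambda\setminus\bigcup_p\,(\omega_p+p^k\varLambda),
\]
sending $\omega=(\omega_p)_p$ to the set of $y\in\varLambda$ whose residue modulo $p^k\varLambda$ avoids $\omega_p$ for every prime $p$; thus $\psi(0)=V$, and one checks at once the equivariance $\psi\big(\omega+(\bar x,\bar x,\dots)\big)=\psi(\omega)+x$ for $x\in\varLambda$. Using the Chinese Remainder Theorem together with the convergence of $\sum_p p^{-nk}$ (valid because $n=k=1$ is excluded, so $nk\ge 2$), I would verify that for \emph{every} $\omega$ the set $\psi(\omega)$ is admissible and omits exactly the coset $\omega_p$ modulo $p^k\varLambda$ for each $p$, hence $\psi(\omega)\in\mathbb A=\mathbb X(V)$; consequently the ``omitted-coset'' map $\phi$ defined on $\psi(X_K)$ by letting $\phi(X)_p$ be the unique coset $c$ with $c+p^k\varLambda\subseteq\varLambda\setminus X$ is measurable and equivariant with $\phi\circ\psi=\mathrm{id}_{X_K}$, so $\psi$ is injective. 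The key step is then to show that $\psi_*(\mathrm{Haar}_{X_K})=\nu$: since the cylinder sets generate the Borel $\sigma$-algebra of $\mathbb X(V)$, it suffices to compare the two measures on them, and for a $\rho$-patch $\mathcal P$ the Haar measure of $\{\omega\mid\psi(\omega)\cap B_\rho(0)=\mathcal P\}$, computed by inclusion--exclusion over subsets $\mathcal F\subseteq (B_\rho(0)\cap\varLambda)\setminus\mathcal P$ and using that Haar measure on $X_K$ is the product of the uniform measures on the finite groups $\varLambda/p^k\varLambda$ (so that the events ``$\omega_p\notin(\mathcal P\cup\mathcal F)/p^k\varLambda$'' are independent over $p$, with probabilities $1-|(\mathcal P\cup\mathcal F)/p^k\varLambda|/p^{nk}$), reproduces exactly the expression of Theorem~\ref{freq} for $\nu(\mathcal P)$. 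Granting this, $\psi$ is a measurable, measure-preserving, equivariant injection between standard probability spaces, hence a metric isomorphism of $K_\nu$ (the group $X_K$ with its Haar measure) onto $(\mathbb X(V),\varLambda,\nu)$, which is~(c).

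Part~(b) then follows. The diagonal $\varLambda$-action on the compact Abelian group $X_K$ is a rotation by the subgroup $\{(\bar x,\bar x,\dots)\mid x\in\varLambda\}$, which is dense by the Chinese Remainder Theorem; hence $K_\nu$ is minimal, uniquely ergodic with Haar measure, ergodic, of zero measure-theoretic entropy (being a compact group rotation), and has purely discrete dynamical spectrum whose group of eigenvalues is the dual $\widehat{X_K}=\bigoplus_p\big(\tfrac{1}{p^k}\varLambda^*/\varLambda^*\big)$, which, embedded in $\widehat\varLambda$ via $y\mapsto(x\mapsto\mathrm{e}^{2\pi\mathrm{i}\,y\cdot x})$, is precisely the group of points of $\mathbb Q\varLambda^*$ with $(k{+}1)$-free denominator, i.e.\ the Fourier--Bohr spectrum of $\gamma$ from Theorem~\ref{thdiff} (which is indeed a group; cf.\ also \cite[Prop.~17]{BLvE}). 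All of these properties pass through the isomorphism of~(c) to $(\mathbb X(V),\varLambda,\nu)$. I expect the main obstacle to be the cylinder computation identifying $\psi_*(\mathrm{Haar}_{X_K})$ with $\nu$ --- that is, matching the prime-by-prime inclusion--exclusion with the explicit frequency formula of Theorem~\ref{freq} --- together with the elementary but essential use of $\sum_p p^{-nk}<\infty$ to ensure that $\psi(\omega)$ omits exactly one coset modulo each $p^k\varLambda$, which is what makes $\psi$ injective.
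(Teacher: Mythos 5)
Your proposal is correct, but for parts (b) and (c) it takes a genuinely different route from the paper; only part (a) coincides (the same reduction to indicators of cylinder sets, where the claim is the definition of $\nu$ via patch frequencies). The paper proves (b) directly: ergodicity via the criterion $\lim_{R\to\infty}\frac{1}{v_nR^n}\sum_{x}\nu\big((C_{\mathcal P}+x)\cap C_{\mathcal Q}\big)=\nu(C_{\mathcal P})\nu(C_{\mathcal Q})$ on cylinder sets (a computation from Theorem~\ref{freq} and the Chinese Remainder Theorem, for which the authors remark that a different generating semi-algebra is technically preferable), zero entropy by citation, and pure-point dynamical spectrum from the diffraction result of Theorem~\ref{thdiff} via the Baake--Lenz equivalence theorem, using the absence of extinctions to identify the dynamical spectrum with the Fourier--Bohr spectrum itself; part (c) is then only sketched, being read off from the model-set description with the details deferred to references. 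You instead prove (c) first, by exhibiting the explicit Borel, equivariant, measure-preserving injection $\psi(\omega)=\varLambda\setminus\bigcup_p(\omega_p+p^k\varLambda)$ --- whose injectivity does require $nk\ge 2$ so that each $\psi(\omega)$ omits exactly one coset mod $p^k\varLambda$, and whose measure preservation is precisely the matching of the product-Haar inclusion--exclusion with the frequency formula of Theorem~\ref{freq} --- and then derive (b) from standard facts about rotations on compact Abelian groups along a dense subgroup. This is essentially the Cellarosi--Sinai route that the paper itself describes as complementary to its own. Your approach buys a self-contained, concrete proof of (c), the part the paper leaves vaguest, and makes ergodicity and pure-point spectrum nearly free; the paper's approach avoids the isomorphism entirely and illustrates the general ``pure-point diffraction implies pure-point dynamical spectrum'' machinery. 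Two minor cautions: $\psi$ is Borel but not continuous, so the isomorphism is metric only (correctly, that is all the statement asks); and while you note that $K_\nu$ is minimal and uniquely ergodic, those properties do not transfer through a metric isomorphism to $(\mathbb X(V),\varLambda)$, which has positive topological entropy --- only the measure-theoretic invariants you actually need (ergodicity, entropy, spectrum) do transfer.
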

\begin{proof}
For part (a), it suffices to show this for the characteristic
functions of cylinder sets of finite patches, as their span is dense
in $C(\mathbb X(V))$. But for such functions, the claim is clear as
the left hand side is the patch frequency as used for the definition
of the measure $\nu$.

For the ergodicity of $(\mathbb X(V),\varLambda,\nu)$, one has to show
that
$$
\lim_{R\rightarrow\infty}\frac{1}{v_nR^n}\sum_{x\in\varLambda\cap
  B_R(0)}\nu\big((C_\mathcal P+x)\cap C_\mathcal Q\big)=\nu(C_\mathcal P)\nu(C_\mathcal Q)
$$
for arbitrary cylinder sets $C_\mathcal P$ and $C_\mathcal Q$;
compare~\cite[Thm. 1.17]{Walters}. The latter in turn follows from a straightforward calculation using
Theorem~\ref{freq} and the definition of the measure $\nu$ together
with the the Chinese Remainder Theorem. In fact, for technical
reasons, it is better to work with a different semi-algebra that also 
generates the Borel $\sigma$-algebra on $\mathbb X(V)$~\cite{H}. 

Vanishing measure-theoretical entropy (relative to $\nu$) was shown
in~\cite[Thm.~4]{PH}, which is in line with the results
of~\cite{BLR}. As a consequence of part (a), the individual
diffraction measure of $V$ according to Theorem~\ref{thdiff} coincides
with the diffraction measure of the system $(\mathbb
X(V),\varLambda,\nu)$ in the sense of~\cite{BL}. Then, pure point
diffraction means pure point dynamical spectrum~\cite[Thm. 7]{BL},
and the latter is the group generated by the Fourier--Bohr spectrum;
compare~\cite[Thm. 8]{BL} and~\cite[Prop. 17]{BLvE}. Since the intensity
formula of Theorem~\ref{thdiff} shows that there are no extinctions,
the Fourier--Bohr spectrum here is itself a group, which completes
part (b).

The Kronecker system can now be read off from the model set
description, which provides the compact Abelian group. For the cases
$k=1$ and $d\ge 2$ as well as $k\ge 2$ and $d=1$, the construction is
given in~\cite{BMP}; see also~\cite[Ch. 5a]{Sing} for an alternative
description. The general formalism is developed in~\cite{BLM}, though
the torus parametrisation does not immediately apply. Some extra work
is required here to establish the precise properties of the
homomorphism onto the compact Abelian group.
\end{proof}

Let us mention that our approach is complementary to that
in~\cite{CS}. There, ergodicity and pure point spectrum are consequences of
determining all eigenfunctions, then concluding via $1$ being a simple
eigenvalue and via the basis property of the eigenfunctions. Here, we
establish ergodicity of the measure $\nu$ and afterwards use the equivalence
theorem between pure point dynamical and diffraction spectrum~\cite[Thm. 7]{BL},
hence employing the diffraction measure of $V$ calculated in~\cite{BMP,PH}.

\section*{Acknowledgements}

It is our pleasure to thank Peter Sarnak for valuable discussions. This work was supported by the German Research Foundation (DFG) within
the CRC~701.

\end{document}